\newtheorem{theorem}{Theorem}
\newtheorem{proposition}[theorem]{Proposition}
\newtheorem{lemma}[theorem]{Lemma}
\newtheorem{corollary}[theorem]{Corollary}
\newtheorem{remark}[theorem]{Remark}
\def\Even{\mathop{\rm Even}\nolimits}
\def\chr{\mathop{\rm chr}\nolimits}
\def\fchr{\mathop{\rm fchr}\nolimits}
\def\fch{\mathop{\rm FCH}\nolimits}
\begin{document}

\title{Free choosability of the cycle}
\author{Yves Aubry, Jean-Christophe Godin and Olivier Togni}
\address{Institut de Math\'ematiques de Toulon, Universit\'e de Toulon, France\\ Institut de Mathématiques de Marseille, France}
\email{yves.aubry@univ-tln.fr,  godinjeanchri@yahoo.fr}
\address{Laboratoire LE2I UMR CNRS 6306, Universit\'e de Bourgogne, France}
\email{ olivier.togni@u-bourgogne.fr}

\subjclass[2010]{05C15, 05C38, 05C72}

\keywords{Coloring, Choosability, Free choosability, Cycles.}

\date{\today}

\begin{abstract}
A graph $G$ is free $(a,b)$-choosable if for any vertex $v$ with $b$ colors assigned and for any list of colors of size $a$ associated with each vertex $u\ne v$, the coloring can be completed by choosing for $u$ a subset of $b$ colors such that adjacent vertices are colored with disjoint color sets. 
In this note, a necessary and sufficient condition for a cycle to be free $(a,b)$-choosable is given. As a corollary, some choosability results are derived for graphs in which cycles are connected by a tree structure.
\end{abstract}

\maketitle

\section{Introduction}
For a graph $G$, we denote its vertex set by $V(G)$ and edge set by $E(G)$.
A color-list $L$ of a graph $G$ is an assignment of lists of integers (colors) to the vertices of $G$. For an integer $a$, a $a$-color-list $L$ of $G$ is a color-list such that $|L(v)|=a$ for any $v\in V(G)$.

In 1996, Voigt considered the following problem:
let $G$ be a graph and $L$ a color-list  and assume that an arbitrary vertex $v\in V(G)$ is precolored by a color $f\in L(v)$. Under which hypothesis is it always possible to complete this precoloring to a proper color-list coloring ? This question leads to the concept of free choosability introduced by Voigt \cite{voi96}. 

Formally, for a graph $G$, integers $a,b$ and a $a$-color-list $L$ of $G$, an {\em $(L,b)$-coloring} of $G$ is a mapping $c$ that associates to each vertex $u$ a subset $c(u)$ of $L(u)$ such that $|c(u)|=b$ and $c(u)\cap c(v)=\emptyset$ for any edge $uv\in E(G)$. The graph $G$ is {\em $(a,b)$-choosable} if for any $a$-color-list $L$ of $G$, there exists an $(L,b)$-coloring . Moreover, $G$ is {\em free $(a,b)$-choosable} if for any $a$-color-list  $L$, any vertex $v$ and any set $c_0\subset L(v)$ of $b$ colors, there exists an $(L,b)$-coloring $c$ such that $c(v)=c_0$.

As shown by Voigt~\cite{voi96}, there are examples of graphs $G$ that are $(a,b)$-choosable but not free $(a,b)$-choosable. Some related recent results concern defective free choosability of planar graphs~\cite{LiMengLiu}.
We investigate, in the next section, the free-choosability of the first interesting case, namely the cycle. We derive a necessary and sufficient condition for a cycle to be free $(a,b)$-choosable (Theorem \ref{theorem52these}). In order to get  a concise statement, we introduce the free-choice ratio of a graph, in the same way that Alon, Tuza and Voigt  \cite{alontuzavoigt97} introduced the choice ratio (which is equal to the so-called fractional chromatic number).

For any real $x$, let $\fch(x)$ be the set of graphs $G$ which are free $(a,b)$-choosable for all $a,b$ such that $\frac{a}{b} \geq x$:

$$\fch(x)=\Bigl\{ G\ | \ \forall \ \frac{a}{b} \geq x, \ G \ \text{is free}\ (a,b)\text{-choosable}\Bigr\}.$$

Moreover, we can define the free-choice ratio 
 $\fchr(G) $ of a graph $G$ by:
 
$$\fchr(G):=\inf \Bigl\{\frac{a}{b} \ | \ G \  \text{is free}\ (a,b)\text{-choosable} \Bigr\}.$$

\begin{remark} 
Erd\H{o}s, Rubin and Taylor have asked \cite{erdo79} the following question:
If $G$ is $(a,b)$-choosable, and $\frac{c}{d}>\frac{a}{b}$, does it imply that $G$ is $(c,d)$-choosable ?
Gutner and Tarsi have shown \cite{GutnerTarsi} that the answer is negative in general.
If we consider the analogue question for the free choosability, then Theorem~\ref{theorem52these} implies that it is true for the cycle.
\end{remark}

The \textsl{path} $P_{n+1}$ of length $n$ is the graph with vertex set $V=\{v_{0},v_{1},\dots,v_{n}\}$ and edge set $E=\bigcup_{i=0}^{n-1} \{v_{i}v_{i+1}\}$. 
The cycle $C_{n}$  of length $n$ is the graph with vertex set  $V=\{v_{0},\dots,v_{n-1}\}$ and edge set $E=\bigcup_{i=0}^{n-1} \{v_{i}v_{i+1 (mod \ n)}\}$.
To simplify the notation, for a color-list $L$ of $P_n$ or $C_n$, we let $L(i)$ denote $L(v_{i})$ and $c(i)$ denote $c(v_{i})$.

The notion of waterfall color-list was introduced in~\cite{AGT10} to obtain choosability results on the weighted path and then used to prove the $(5m,2m)$-choosability of triangle-free induced subgraphs of the triangular lattice. We recall one of the results from \cite{AGT10} that will be used in this note, with the function $\Even$ being defined for any real $x$ by: $\Even(x)$ is the smallest even integer $p$ such that $p\geq x$.

\begin{proposition}[\cite{AGT10}]
\label{theorem48these}
Let $L$ be a color-list  of $P_{n+1}$ such that $|L(0)|=|L(n)|= b$,
and $|L(i)|=a=2b+e$ for all $i \in \{1,\dots,n-1\}$ (with $e>0$). 
\begin{center}
If $n \geq \Even\Bigl(\frac{2b}{e}\Bigr)$ then $P_{n+1}$ is $(L,b)$-colorable.  
\end{center} 
\end{proposition}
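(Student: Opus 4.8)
The plan is a greedy colouring of $P_{n+1}$ from left to right, driven by the surplus $e=a-2b$ available at each internal vertex. Since $|L(0)|=|L(n)|=b$, any $(L,b)$-colouring must satisfy $c(0)=L(0)$ and $c(n)=L(n)$; put $A=L(0)$ and $Z=L(n)$. So it suffices to choose $b$-subsets $c(i)\subseteq L(i)$ for $1\le i\le n-1$ with $c(i)\cap c(i+1)=\emptyset$ for $1\le i\le n-2$, with $c(1)\cap A=\emptyset$, and with $c(n-1)\cap Z=\emptyset$. At every internal step a valid choice exists, because $|L(i)\setminus c(i-1)|\ge a-b=b+e>b$; the real issue is to steer these choices so that the last one can avoid $Z$.

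Call a vertex $v_i$ \emph{light} if $i\equiv n-1\pmod 2$ (so $v_{n-1}$ is light, and $v_0$ is light precisely when $n$ is odd) and \emph{heavy} otherwise. I would colour greedily so that $c(i)$ uses as few colours of $Z$ as $L(i)\setminus c(i-1)$ allows when $v_i$ is light, and as many colours of $Z$ as possible when $v_i$ is heavy. The key estimate is that for two consecutive light vertices $v_i,v_{i+2}$ one gets
\[|c(i+2)\cap Z|\ \le\ \max\bigl(0,\,|c(i)\cap Z|-e\bigr),\]
because among the $\ge b+e$ colours of $L(i+2)\setminus c(i+1)$ there are at least $e$ ``spare'' ones that can replace colours of $Z$, once the intervening heavy vertex $v_{i+1}$ has absorbed as much of $Z$ as its list permits.

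Starting the count, the first light vertex we can control — $v_1$ when $n$ is even, and $v_2$ when $n$ is odd (for odd $n$ the light vertex $v_0$ is frozen to $A$ and may carry all of $Z$) — already satisfies $|c(i)\cap Z|\le b-e$ for that vertex, since its list contains at most $b$ colours of $Z$ while leaving at least $b+e$ admissible colours; iterating the estimate, the bound then decreases by at least $e$ at every further light vertex. Hence $c(n-1)\cap Z=\emptyset$ as soon as the number of light vertices up to and including $v_{n-1}$ is at least $b/e$, that is, as soon as $n/2\ge b/e$ when $n$ is even, and $(n-1)/2\ge b/e$ when $n$ is odd. A short parity check shows that both inequalities are implied by $n\ge\Even\Bigl(\frac{2b}{e}\Bigr)$, and the extra constraint at the left end costs nothing beyond the first step, since $|L(1)\setminus A|\ge b+e$. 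Equivalently, one may run the same computation from right to left, tracking the family of $b$-subsets of $L(i)$ that extend to an $(L,b)$-colouring of $v_i\dots v_n$ and proving by induction on $n-i$ that it contains every $b$-subset whose overlap with $Z$ lies on the appropriate side of a threshold which moves by $e$ every two steps.

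The step I expect to be the main obstacle is the bookkeeping hidden in ``as much of $Z$ as its list permits'': the set $Z$ effectively dilutes as one moves away from $v_n$, since an internal list $L(i)$ need not contain all of $Z$, so the invariant has to be phrased relative to $Z\cap L(i)$ (or to a propagated effective forbidden set), and one must verify that the extremal configuration is the one in which every internal list does contain $Z$ — intuitively clear, because deleting colours of $Z$ from an internal list only weakens the constraint that vertex imposes, but it has to be carried through carefully, together with the parity case analysis that pins the threshold exactly to $\Even\bigl(\frac{2b}{e}\bigr)$. The bound is sharp: for $n=\Even(2b/e)-1$ (which is always odd), taking all internal lists equal to a common $(2b+e)$-set and $L(0)=L(n)$ equal to a fixed $b$-subset of it, the right-to-left recursion shows that no choice of $c(1)$ can be simultaneously disjoint from $A$ and extendable towards $v_n$, so $P_{n+1}$ is not $(L,b)$-colourable in this case.
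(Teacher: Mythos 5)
Your plan — greedy from left to right, measuring progress by how much each colour class meets $Z=L(n)$, with that overlap dropping by $e$ every two steps — founders on exactly the difficulty you flag at the end, and the repair you propose for it is not available. The key estimate $|c(i+2)\cap Z|\le\max\bigl(0,|c(i)\cap Z|-e\bigr)$ is false once the intervening lists need not contain $Z$: take $b=4$, $e=1$, $Z=\{1,2,3,4\}$, and suppose $L(i)\cap Z=L(i+1)\cap Z=\emptyset$ while $Z\subseteq L(i+2)$ and $c(i+1)\subseteq L(i+2)$. Then $|c(i)\cap Z|=0$, the heavy vertex $v_{i+1}$ can absorb nothing of $Z$, and $L(i+2)\setminus c(i+1)$ has exactly $b+e=5$ elements, four of which are the colours of $Z$; any $b$-subset of it meets $Z$ in at least $3$ colours, instead of the claimed $0$. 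The failure is not only in the analysis but in the strategy itself: at $v_{i+1}$ the instruction ``take as much of $Z$ as the list permits'' is vacuous when $L(i+1)\cap Z=\emptyset$, whereas the choice that actually matters there is to avoid $L(i+2)\setminus Z$ — a constraint invisible to any invariant phrased in terms of $Z$ alone. The same objection applies to your right-to-left reformulation: the family of extendable $b$-subsets of $L(i)$ is not characterized by a threshold on $|{\cdot}\cap Z|$.

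Your proposed fix — that the extremal configuration is the one in which every internal list contains $Z$ — is contradicted by the paper's own Lemma~\ref{counterexampleC2n}: in the tight non-colourable instances (see Figure~\ref{fc6}) the internal lists are shifted intervals, almost all of them \emph{disjoint} from $Z$, and the obstruction propagates through the overlaps $L(i)\cap L(i+1)$ rather than through occurrences of $Z$. So deleting $Z$ from internal lists does not ``only weaken'' the instance for your invariant. Tracking the correctly propagated forbidden set is the entire content of the proof, and it is what the waterfall machinery of \cite{AGT10} (which the paper cites for this proposition and sketches algorithmically in its final section) accomplishes: rename colours so that lists of vertices at distance at least two become disjoint, colour greedily in that normalized instance, then undo the renaming and resolve conflicts. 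Your parity bookkeeping and the count of roughly $b/e$ useful steps is the right heuristic for the threshold $\Even\bigl(\frac{2b}{e}\bigr)$, but as written your argument establishes the proposition only under the extra hypothesis $Z\subseteq L(i)$ for all internal $i$ (and a symmetric condition at the $L(0)$ end), which is not the general case and not the hard one.
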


For example, let $P_{n+1}$ be the path of length $n$ with a color-list $L$ such that $|L(0)|=|L(n)|= 4$,
and $|L(i)|=9$ for all $i \in \{1,\dots,n-1\}$. Then the previous proposition tells us that we can find an $(L,4)$-coloring of $P_{n+1}$ whenever $n\geq 8$. In other words, if $n\geq 8$, we can choose 4 colors on each vertex such that adjacent vertices receive disjoint colors sets. If  $|L(i)|=11$ for all $i \in \{1,\dots,n-1\}$, then $P_{n+1}$ is $(L,4)$-colorable whenever $n\geq 4$.
On the other side, there are color-lists $L$ for which $P_{n+1}$ is not $(L,b)$-colorable

%



\section{Free choosability of the cycle}\label{cycle}

We begin with a negative result for the even-length cycle:

\begin{lemma}
 \label{counterexampleC2n}
For any integers $a,b,p$ such that $p\ge 2$, and $\frac{a}{b} < 2+\frac{1}{p}$, the cycle $C_{2p}$ is not free $(a,b)$-choosable. 
\end{lemma}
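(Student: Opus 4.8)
The plan is to construct, for every admissible triple $(a,b,p)$, one $a$-color-list $L$ on $C_{2p}=v_0v_1\cdots v_{2p-1}$ and a precoloured set $c_0\subseteq L(v_0)$ with $|c_0|=b$ such that no $(L,b)$-coloring $c$ satisfies $c(v_0)=c_0$. First I would dispose of the degenerate range $a<2b$: there even the constant list $L(v)=\{1,\dots,a\}$ admits no $(L,b)$-coloring, since two disjoint $b$-subsets need $2b>a$ colors, so $C_{2p}$ is not even $(a,b)$-choosable. Hence assume $a\ge 2b$, write $a=2b+e$ with $e\ge 0$, and note that the hypothesis $\frac{a}{b}<2+\frac{1}{p}$ is then equivalent to $pe\le b-1$.

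Fixing $c(v_0)=c_0$ and deleting $v_0$ leaves the path on $v_1,\dots,v_{2p-1}$, which must be coloured so that $c(v_1)$ and $c(v_{2p-1})$ both avoid $c_0$; equivalently, splitting $v_0$ into two copies forced to colour $c_0$, one must $(L,b)$-colour the path $v_0',v_1,\dots,v_{2p-1},v_0''$ of length $2p$ whose two endpoints are precoloured $c_0$ and whose $2p-1$ internal vertices have list-size $a$. When $e\ge 1$ the inequality $pe\le b-1$ gives $2p<\Even(2b/e)$, so this is exactly the range in which Proposition~\ref{theorem48these} provides no guarantee, and the task is to produce a non-colourable instance there; for $e=0$ the construction below settles it directly. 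Either way the internal lists become $L(v_1),\dots,L(v_{2p-1})$, and $L(v_0)$ may be taken to be any $a$-set containing $c_0$.

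For the construction I would \emph{propagate a forced partial colouring inward from both copies of $v_0$ towards the antipodal vertex $v_p$}. Choose pairwise disjoint $b$-sets $c_0=W_0,W_1,\dots,W_{p-1}$ and a mirror family $W_1',\dots,W_{p-1}'$, and auxiliary blocks $A_1,\dots,A_p$ of size $e$; put $L(v_i)=W_{i-1}\sqcup W_i\sqcup A_i$ along the arc $v_1\dots v_{p-1}$, symmetrically along $v_{2p-1}\dots v_{p+1}$, and $L(v_p)=W_{p-1}\sqcup W_{p-1}'\sqcup A_p$. Starting from $c(v_0)=W_0$, the disjointness constraints force $c(v_i)$ to contain all but a bounded number of colours of its target $W_i$ (for $e=0$ each $c(v_i)$ is forced to equal $W_i$, and $c(v_p)$ has no colour available); in general $c(v_{p-1})$ and $c(v_{p+1})$ together occupy all but fewer than $b$ colours of $L(v_p)$, so $c(v_p)$ cannot be chosen. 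The case $p=2$ is already instructive: taking $L(v_1)=c_0\sqcup B_1$, $L(v_3)=c_0\sqcup B_3$ with $B_1,B_3$ disjoint of size $b+e$ and $L(v_2)$ of size $2b+e$ chosen so that only $e$ colours of $B_1\cup B_3$ lie outside it, every admissible $c(v_1)\subseteq B_1$, $c(v_3)\subseteq B_3$ cover at least $2b-e$ colours of $L(v_2)$, leaving at most $2e<b$ for $c(v_2)$.

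The step I expect to be the genuine obstacle is the bookkeeping of accumulated slack. A naive propagation loses $e$ ``escape colours'' per vertex, so over two arcs of length roughly $p$ one only reaches a contradiction under the stronger hypothesis $2pe<b$. The point is to place each $A_i$ so that the colours a vertex uses outside its own $W_i$ fall outside the list of the next vertex, and are therefore absorbed rather than compounded, and to choose the two target families $W_i,W_i'$ so that the residual losses of the two arcs are not double-counted at $v_p$. Making this accounting close up at exactly $pe\le b-1$ — tight already for $C_4$ — is the technical heart of the argument.
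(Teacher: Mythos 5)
Your overall strategy --- cut the cycle at the precoloured vertex, propagate a forced partial colouring along both arcs, and make the two forced colourings collide at an antipodal vertex --- is the same as the paper's, and your reduction of the hypothesis to $pe\le b-1$ (writing $a=2b+e$) together with your $C_4$ instance are correct. The gap is that the general construction is never actually produced, and the part you defer as ``the technical heart'' is exactly the missing idea, not routine bookkeeping. With $L(v_i)=W_{i-1}\sqcup W_i\sqcup A_i$ and all blocks pairwise disjoint, the deficiency $d_i=b-|c(v_i)\cap W_i|$ satisfies only $d_i\le d_{i-1}+e$, because the fresh block $A_i$ hands the colourer $e$ new escape colours at \emph{every} step; your proposed remedy (arrange that the colours $c(v_i)$ uses outside $W_i$ lie outside $L(v_{i+1})$) is already satisfied by this disjoint layout and changes nothing. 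At $v_p$ one then blocks only $2b-2(p-1)e$ colours of $L(v_p)$, leaving up to $(2p-1)e$ available, so the argument closes only under $(2p-1)e<b$, which is strictly stronger than $pe<b$ when $p\ge2$ and $e\ge1$. This is not merely a lossy estimate: for $p=3$, $a=9$, $b=4$ (the paper's own illustrated case, where $pe=3\le b-1$) your lists admit a valid completion, e.g.\ $c(v_1)=(W_1\setminus\{w\})\cup A_1$, $c(v_2)=\{w\}\cup A_2\cup(W_2\setminus\{x,y\})$, the mirror choices on the other arc, and $c(v_p)=\{x,y,x',y'\}$.

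The paper's counterexample gains the needed factor of two by using overlapping \emph{intervals} of integers whose consecutive overlaps alternate in size between $b$ and $b+e$: one step introduces a fresh window of $b+e$ colours (into which the forced set fits with $e$ to spare) and the next introduces only $b$ new colours, so the $e$ ``extra'' colours of every other list are recycled from colours that are already blocked, and the accumulated slack grows by $e$ per \emph{pair} of steps rather than per step. The final count at the last vertex then closes at exactly $pe<b$. Without that staggering (or an equivalent device), your construction does not establish the lemma on the stated range, so the proof is incomplete.
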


\begin{proof}
We construct a counterexample for the free-choosability of $C_{2p}$: let $L$ be the $a$-color-list of $C_{2p}$ such that

$$L(i) = \left \{
\begin{array}{ll}
\{1,\dots,a\}, &  \mbox{if} \ i \in \{0,1\}; \\
\{\frac{i-1}{2}a+1,\dots,\frac{i-1}{2}a+a\}, &  \mbox{if} \ i \not=2p-1 \ \mbox{is odd}; \\
\{b+\frac{i-2}{2}a+1,\dots,b+(\frac{i-2}{2}+1)a\},  & \mbox{if} \ i \ \mbox{is even and} \ i \not= 0; \\
\{1,\dots,b,1+(p-1)a,\dots,1+pa-b-1\}, & \mbox{if} \ i=2p-1.
\end{array}
\right.
$$ 

If we choose  $c_0=\{1,\dots,b\} \subset L(0)$, we can check that there does not exist an  $(L,b)$-coloring of $C_{2p}$ such that $c(0)=c_0$, so $C_{2p}$ is not free $(a,b)$-choosable.
See Figure \ref{fc6} for an illustration when $p=3$, $a=9$ and $b=4$.  
\end{proof}

\begin{figure}
\begin{center}
\begin{tikzpicture}[scale=0.8]
\def \n {5}
\def \radius {2cm}
\def \margin {9} 

\node[draw, circle, minimum size=3pt,inner sep=1pt] at (90:2cm)[label=above:{$\{1, 2,\ldots ,9\}$}]{$v_0$};
\node[draw, circle, minimum size=3pt,inner sep=1pt] at (150:2cm)[label=left:{$\{1,\ldots, 4,19,\ldots,23\}$}]{$v_5$};
\node[draw, circle, minimum size=3pt,inner sep=1pt] at (210:2cm)[label=left:{$\{14, \ldots,22\}$}]{$v_4$};
\node[draw, circle, minimum size=3pt,inner sep=1pt] at (270:2cm)[label=below:{$\{10, \ldots, 18\}$}]{$v_3$};
\node[draw, circle, minimum size=3pt,inner sep=1pt] at (330:2cm)[label=right:{$\{5, \ldots, 14\}$}]{$v_2$};
\node[draw, circle, minimum size=3pt,inner sep=1pt] at (30:2cm)[label=right:{$\{1, \ldots, 9\}$}]{$v_1$};

\foreach \s in {0,...,\n}
{
  \draw[-, >=latex] ({360/(\n +1) * (\s -1)+\margin +30}:\radius) 
    arc ({360/(\n +1) * (\s - 1)+\margin +30}:{360/(\n +1) * (\s)-\margin +30}:\radius);
} 
\end{tikzpicture}
\end{center}
 \caption{\label{fc6}The cycle $C_6$, along with a $9$-color-list $L$ for which there is no $(L,4)$-coloring $c$ such that $c(v_0)=\{1,2,3,4\}$.}
\end{figure}
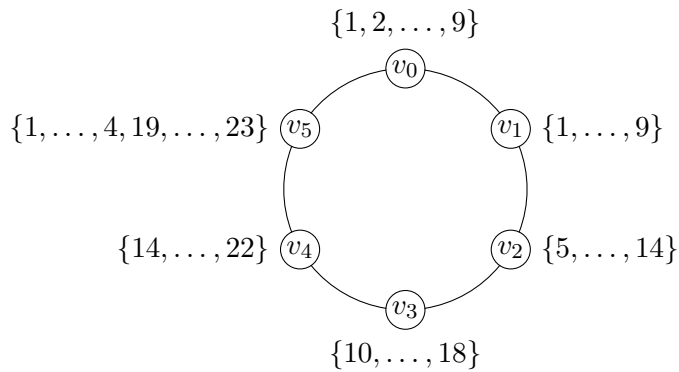

Now, if $\lfloor x \rfloor$ denotes the greatest integer less than or equal to the real $x$,  we can state:

\begin{theorem}
\label{theorem52these}
For the cycle $C_n$ of length $n$,  
$$C_n \in \fch\Big(2+ \Big\lfloor \frac{n}{2} \Big\rfloor ^{-1}\Big). $$
Moreover, we have:
$$\fchr(C_n)=2+ \Big\lfloor \frac{n}{2} \Big\rfloor ^{-1}.$$
\end{theorem}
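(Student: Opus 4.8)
The plan is to prove the two assertions of Theorem~\ref{theorem52these} separately. The first, $C_n\in\fch(2+\lfloor n/2\rfloor^{-1})$, means that $C_n$ is free $(a,b)$-choosable for all integers $a,b$ with $a/b\ge 2+\lfloor n/2\rfloor^{-1}$; call this the positive direction. For the equality $\fchr(C_n)=2+\lfloor n/2\rfloor^{-1}$, the bound $\le$ is immediate from the positive direction (and is even attained, at $a=2\lfloor n/2\rfloor+1$, $b=\lfloor n/2\rfloor$), while the bound $\ge$ is the negative direction: $C_n$ is not free $(a,b)$-choosable whenever $a/b<2+\lfloor n/2\rfloor^{-1}$.

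For the positive direction I would reduce to a path and invoke Proposition~\ref{theorem48these}. Fix $a,b$ with $a/b\ge 2+\lfloor n/2\rfloor^{-1}$ and put $e=a-2b$; the hypothesis rewrites as $e\ge b/\lfloor n/2\rfloor$, so $e>0$ and $2b/e\le 2\lfloor n/2\rfloor\le n$. Given an $a$-color-list $L$ of $C_n$, a vertex $v$ and $c_0\subseteq L(v)$ with $|c_0|=b$, use the rotational symmetry to assume $v=v_0$, and delete $v_0$: it suffices to $(L,b)$-color the path on $v_1,\dots,v_{n-1}$ so that $c(v_1)$ and $c(v_{n-1})$ avoid $c_0$. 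Since $|L(v_i)\setminus c_0|\ge a-b=b+e$ for $i\in\{1,n-1\}$, pick $(b+e)$-subsets $L'(v_1)\subseteq L(v_1)\setminus c_0$ and $L'(v_{n-1})\subseteq L(v_{n-1})\setminus c_0$, choose two $b$-element sets $S_1,S_2$ of fresh colors (disjoint from each other and from all colors in $L$), and consider the path $P$ on the $n+1$ vertices $w_1,v_1,v_2,\dots,v_{n-1},w_2$ carrying the lists $S_1$, $L'(v_1)\cup S_1$, $L(v_2),\dots,L(v_{n-2})$, $L'(v_{n-1})\cup S_2$, $S_2$. Its two endpoints have lists of size $b$, its interior vertices lists of size $a=2b+e$, and $n\ge\Even(2b/e)$ (because $2b/e\le 2\lfloor n/2\rfloor\le n$ and $2\lfloor n/2\rfloor$ is even), so Proposition~\ref{theorem48these} yields a coloring $c'$ of $P$ assigning $b$ colors of each list to each vertex. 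As $|c'(w_1)|=b=|S_1|$ we get $c'(w_1)=S_1$, whence $c'(v_1)\cap S_1=\emptyset$ and therefore $c'(v_1)\subseteq L'(v_1)\subseteq L(v_1)\setminus c_0$; likewise $c'(v_{n-1})\subseteq L(v_{n-1})\setminus c_0$. Putting $c(v_0)=c_0$ and $c(v_i)=c'(v_i)$ for $1\le i\le n-1$ produces the required $(L,b)$-coloring of $C_n$.

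For the negative direction, even $n=2p$ is exactly Lemma~\ref{counterexampleC2n} (with $\lfloor n/2\rfloor=p$), so only odd $n=2p+1$ remains; here $\lfloor n/2\rfloor=p$ and one needs, for every $a,b$ with $pa<(2p+1)b$, an $a$-color-list of $C_{2p+1}$ and a precoloring of $v_0$ that cannot be completed. Set $e=a-2b$; if $a<2b$ the cycle is not even $(a,b)$-choosable, so assume $e\ge 0$, and note that $pa<(2p+1)b$ reads $pe<b$. I would mimic the construction of Lemma~\ref{counterexampleC2n}: take $L(v_0)=L(v_1)=\{1,\dots,a\}$, $L(v_{2k+1})=\{ka+1,\dots,(k+1)a\}$ and $L(v_{2k})=\{b+(k-1)a+1,\dots,b+ka\}$ over the relevant $k$, and $L(v_{2p})=\{1,\dots,b\}\cup\{(p-1)a+b+1,\dots,pa\}$ for the closing vertex, and precolor $v_0$ with $c_0=\{1,\dots,b\}$. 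The verification then consists of following the forced colors along $v_1,v_2,\dots,v_{2p-1}$ — each $c(v_i)$ must avoid $c(v_{i-1})$, and a worst-case colorer first uses up the colors of $L(v_i)$ lying outside $L(v_{i+1})$ — and checking that the colors available to $v_{2p-1}$ outside $L(v_{2p})$ number only $(p-1)e$, so that $c(v_{2p-1})$ meets $\{(p-1)a+b+1,\dots,pa\}$ in at least $b-(p-1)e$ colors; then $c(v_{2p})$, which must lie in $\{(p-1)a+b+1,\dots,pa\}$ and avoid $c(v_{2p-1})$, has at most $pe<b$ colors available — impossible.

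The step I expect to be the main obstacle is precisely this odd-cycle construction and its verification: already the even case of Lemma~\ref{counterexampleC2n} is delicate, and for $C_{2p+1}$ there is one more vertex through which the accumulating ``deficiency'' has to be carried, while the threshold on $a/b$ only relaxes by an integer part, so the interval-lists must be positioned — and the fact that \emph{no} completion exists, not merely that the greedy one fails, must be argued — with some care. Once both directions are established, combining them gives $C_n\in\fch(2+\lfloor n/2\rfloor^{-1})$ together with $\fchr(C_n)=2+\lfloor n/2\rfloor^{-1}$.
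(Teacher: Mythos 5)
Your proof is correct, and the positive direction is essentially the paper's argument: cut the cycle at $v_0$ and reduce to a path of length $n$ whose two endpoints carry $b$-element lists, then invoke Proposition~\ref{theorem48these} via $e=a-2b\ge b/\lfloor n/2\rfloor$ and $\Even(2b/e)\le 2\lfloor n/2\rfloor\le n$. (The paper simply takes $L'(0)=L'(n)=c_0$ as the endpoint lists instead of adjoining fresh colors $S_1,S_2$ and trimming the lists of $v_1,v_{n-1}$; the two reductions are interchangeable.) The even-cycle lower bound is Lemma~\ref{counterexampleC2n} in both treatments. Where you genuinely diverge is the odd case $n=2p+1$: the paper disposes of it in one line by citing Voigt's computation of the choice ratio $\chr(C_{2p+1})=2+1/p$ together with the trivial inequality $\fchr\ge\chr$ (free $(a,b)$-choosability implies $(a,b)$-choosability, and a uniform list already defeats $(a,b)$-choosability below the fractional chromatic number), whereas you build an explicit staircase list and a precoloring of $v_0$ that cannot be completed. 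Your construction does go through, and the step you flag as the main obstacle is not actually problematic: writing $L(v_{2k+1})=U_k\cup W_k$ with $W_k$ the top $a-b$ colors, one proves by induction the invariant $|c(v_{2k+1})\cap W_k|\ge b-ke$ for \emph{every} admissible coloring (not merely the greedy one, since each step only uses worst-case counting), and at $k=p-1$ this leaves $v_{2p}$ with at most $pe<b$ usable colors, exactly as you computed. So your route costs an induction the paper avoids, but buys a self-contained free-choosability counterexample in the same spirit as Lemma~\ref{counterexampleC2n}, while the paper's route buys brevity at the price of an external citation.
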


\begin{proof}
Let $a,b$ be two integers such that  $a/b \geq  2+\lfloor \frac{n}{2}\rfloor ^{-1}$. Let $L$ be a $a$-color-list of $C_n$. Without loss of generality, we can suppose that $v_0$ is the vertex chosen for the free-choosability and $c_0\subset L(v_0)$ has $b$ elements. 
It remains to construct an $(L,b)$-coloring $c$ of $C_n$ such that $c(v_0)=c_0$.
Hence we have to construct an $(L',b)$-coloring  $c$ of $P_{n+1}$  such that $L'(0)=L'(n)=L_0$ and for all $i \in \{1,...,n-1\}$,  $L'(i)=L(v_i)$. We have 
$|L'(0)|=|L'(n)|=b$ and for all  $i \in \{1,...,n-1\}$,  $|L'(i)|=a$.
Since 
$a/b \geq 2+\lfloor \frac{n}{2}\rfloor ^{-1}$ and 
$e=a-2b$, we get 
$e/b \geq\lfloor \frac{n}{2}\rfloor^{-1}$ hence
$n \geq \Even(2b/e)$.
Using Proposition \ref{theorem48these}, we get:

$$C_n \in \fch(2+ \Big\lfloor \frac{n}{2} \Big\rfloor ^{-1}).$$

Hence, we have that $\fchr(C_n) \leq 2+\lfloor \frac{n}{2}\rfloor ^{-1}.$ Moreover, let us prove that 
 $M=2+\lfloor \frac{n}{2}\rfloor ^{-1}$ is reached. For $n$ odd, Voigt has proved \cite{voi98} that the choice ratio $\chr(C_n)$ of a cycle of odd length $n$ is exactly $M$.
Hence $\fchr(C_n)\geq \chr(C_n)=M$, and the result is proved.
For $n$ even, Lemma~\ref{counterexampleC2n} asserts that $C_n$ is not free $(a,b)$-choosable for $\frac{a}{b} < 2+\lfloor \frac{n}{2}\rfloor ^{-1}$.
\end{proof}

\begin{remark} In particular, the previous theorem implies that
if $b,e,n$ are integers such that $n \geq \Even(\frac{2b}{e})$, then the cycle $C_{n}$ of length $n$ is free $(2b+e,b)$-choosable.
\end{remark}

In order to extend the result to other graphs then cycles, the following simple proposition will be useful:
\begin{proposition}\label{propGv}
 Let $a,b$ be integers with $a\ge 2b$. Let $G$ be a graph and $G_v$ be the graph obtained by adding a leave $v$ to any vertex of $G$. Then $G$ is free $(a,b)$-choosable if and only if $G_v$ is free $(a,b)$-choosable.
\end{proposition}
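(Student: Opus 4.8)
The plan is to prove the two implications separately, both being elementary. For the forward direction, suppose $G$ is free $(a,b)$-choosable; I want to show $G_v$ is too. Fix an $a$-color-list $L$ of $G_v$, a vertex $w$ of $G_v$, and a set $c_0\subset L(w)$ of size $b$. There are two cases: either $w$ is the new leaf $v$ (attached, say, to a vertex $u$ of $G$), or $w\in V(G)$. In the first case, I would first forget $v$, apply free $(a,b)$-choosability of $G$ to the restriction of $L$ to $V(G)$ with the vertex $u$ and \emph{any} chosen $b$-subset of $L(u)$ disjoint from as much of $c_0$ as possible — more carefully, I would instead first color $G$ using free choosability pinned at $u$, but the pin must be chosen so that afterwards $v$ can still be colored; since $|L(v)|=a\ge 2b$ and $v$ has only the single neighbor $u$, once $c(u)$ is fixed there remain at least $a-b\ge b$ colors in $L(v)$, so $v$ can always be completed — but we need $c(v)=c_0$ exactly. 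So the right order is: pick any $b$-subset $S$ of $L(u)$ with $S\cap c_0=\emptyset$; this is possible because $|L(u)\setminus c_0|\ge a-b\ge b$. Then apply free $(a,b)$-choosability of $G$ to color $V(G)$ with $c(u)=S$, and finally set $c(v)=c_0$, which is legal since $c_0\cap S=\emptyset$. In the second case $w\in V(G)$: apply free $(a,b)$-choosability of $G$ with the pin $c(w)=c_0$ to color $V(G)$, then color $v$ using the remaining $\ge a-b\ge b$ colors of $L(v)\setminus c(u)$.

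For the converse, suppose $G_v$ is free $(a,b)$-choosable; I want $G$ free $(a,b)$-choosable. Given an $a$-color-list $L$ of $G$, a vertex $w\in V(G)$ and $c_0\subset L(w)$ of size $b$, extend $L$ to $G_v$ by assigning to the leaf $v$ any $a$-set $L(v)$ (for definiteness, a set of $a$ colors disjoint from $L(u)$, or really any $a$-set — it does not matter). Apply free $(a,b)$-choosability of $G_v$ with pin $c(w)=c_0$; the restriction of the resulting coloring to $V(G)$ is the desired $(L,b)$-coloring of $G$ with $c(w)=c_0$.

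There is essentially no main obstacle here; the only point requiring a moment's care is the first case of the forward direction, where one must observe that the pin at $u$ used for $G$ cannot be chosen freely to equal whatever is convenient but must be chosen \emph{before} invoking free choosability of $G$, and that the hypothesis $a\ge 2b$ is exactly what guarantees both that such a pin disjoint from $c_0$ exists and that the leaf can always be finished off. I would state this as the one line $|L(u)\setminus c_0|\ge a-b\ge b$ and $|L(v)\setminus c(u)|\ge a-b\ge b$, and the proof is complete.
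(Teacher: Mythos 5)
Your proof is correct and follows essentially the same route as the paper's: the direction from $G_v$ to $G$ is handled by restriction, and the substantive direction splits into the two cases according to whether the pinned vertex is the leaf $v$ or lies in $G$, with the inequality $a-b\ge b$ doing all the work. Your write-up is in fact slightly more explicit than the paper's about why the pin $S\subset L(u)\setminus c_0$ must be chosen before invoking free choosability of $G$.
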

\begin{proof}Since the "only if" part holds trivially, let us prove the "if" part.
Assume $G$ is free $(a,b)$-choosable and let $L$ be a $a$-color-list of $G$. Let $v$ be a new vertex and let $G_v$ be the graph obtained from $G$ by adding the edge $uv$, for some $u\in V(G)$. Then any $(L,b)$-coloring $c$ of $G$ can be extended to an $(L,b)$-coloring of $G_v$ by giving to $v$ $b$ colors from $L(v)\setminus c(u)$ ($|L(v)\setminus c(u)|\ge b$ since $a\ge 2b$). If $v$ is colored with $b$ colors from its list, then, since $G$ is free $(a,b)$-choosable, the coloring can be extended to an $(L,b)$-coloring of $G_v$ by first choosing for $u$ a set of $b$ colors from $L(u)\setminus c(v)$.
\end{proof}

Starting from a single edge and applying inductively Proposition~\ref{propGv} allows to obtain the following corollary:
\begin{corollary}\label{corT}
Let $T$ be a tree of order $n\ge 2$. Then $$T\in \fch (2).$$  
\end{corollary}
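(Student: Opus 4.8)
The plan is to proceed by induction on the order $n$ of the tree $T$, using Proposition~\ref{propGv} as the inductive step. The base case is $n=2$: a tree of order $2$ is a single edge $K_2$, and one checks directly that $K_2$ is free $(a,b)$-choosable whenever $a\ge 2b$. Indeed, if one endpoint $v$ is precolored with a set $c_0$ of $b$ colors from its list, the other endpoint $u$ has a list of size $a\ge 2b$, so $|L(u)\setminus c_0|\ge a-b\ge b$ and we may choose $b$ colors for $u$ disjoint from $c_0$. Since $2+\lfloor 2/2\rfloor^{-1}=3$, this in particular gives $K_2\in\fch(2)$, because any ratio $a/b\ge 2$ with $a,b$ integers forces $a\ge 2b$.

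For the inductive step, suppose every tree of order $n-1$ is free $(a,b)$-choosable for all integers $a,b$ with $a\ge 2b$, and let $T$ be a tree of order $n\ge 3$. Pick a leaf $v$ of $T$ and let $u$ be its unique neighbor; then $T' = T - v$ is a tree of order $n-1$, and $T$ is exactly the graph obtained from $T'$ by adding the leaf $v$ adjacent to $u$, i.e. $T = (T')_v$ in the notation of Proposition~\ref{propGv}. By the induction hypothesis $T'$ is free $(a,b)$-choosable, so Proposition~\ref{propGv} (the ``if'' direction, which requires precisely $a\ge 2b$) shows that $T = (T')_v$ is free $(a,b)$-choosable. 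This completes the induction, so every tree of order $n\ge 2$ is free $(a,b)$-choosable for all integers $a,b$ with $a\ge 2b$.

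Finally I would translate this into the $\fch$ notation. If $a/b\ge 2$ for integers $a,b$, then $a\ge 2b$, so by the above $T$ is free $(a,b)$-choosable; since this holds for all such $a,b$, we get $T\in\fch(2)$ by definition of $\fch$.

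I do not expect any genuine obstacle here: the only subtlety is making sure the hypothesis $a\ge 2b$ needed to invoke Proposition~\ref{propGv} is available, which it is since $a/b\ge 2$, and checking the base case $K_2$, which is immediate. The argument is a clean induction, and the phrase ``Starting from a single edge and applying inductively Proposition~\ref{propGv}'' in the statement's lead-in already signals exactly this structure.
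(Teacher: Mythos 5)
Your proof is correct and follows exactly the route the paper intends: the lead-in sentence before the corollary ("Starting from a single edge and applying inductively Proposition~\ref{propGv}") is the paper's entire argument, and you have simply filled in the details (the base case $K_2$ and the leaf-removal induction) accurately, including the needed check that $a/b\ge 2$ gives the hypothesis $a\ge 2b$ of Proposition~\ref{propGv}.
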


Now, we can state the following:
\begin{proposition}
 If $G$ is a unicyclic graph with girth $g$, then $$G \in \fch\Big(2+ \Big\lfloor \frac{g}{2} \Big\rfloor ^{-1}\Big). $$
\end{proposition}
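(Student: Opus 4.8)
The plan is to reduce the statement to Theorem~\ref{theorem52these} and Proposition~\ref{propGv}. First I would fix integers $a,b$ with $\frac{a}{b}\geq 2+\lfloor\frac g2\rfloor^{-1}$ and observe that, since the right-hand side strictly exceeds $2$, one has $a>2b$, so in particular the hypothesis $a\ge 2b$ of Proposition~\ref{propGv} holds. The goal is then to prove that $G$ is free $(a,b)$-choosable; as $a,b$ range over all pairs satisfying the ratio constraint, this yields $G\in\fch(2+\lfloor\frac g2\rfloor^{-1})$.

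The crucial structural point I would establish next is that a connected unicyclic graph $G$ of girth $g$ can be built from the cycle $C_g$ by a finite sequence of moves, each attaching one new pendant vertex to the current graph. Since $G$ has exactly one cycle, that cycle is a shortest cycle and hence has length $g$; and if $|V(G)|>g$ then a vertex of $G$ at maximum distance from the cycle is a leaf, whose removal leaves a connected unicyclic graph on one fewer vertex with the same unique cycle. Iterating the removal down to $C_g$ and reversing the order produces a chain $C_g=G_0,G_1,\dots,G_k=G$ in which each $G_{i+1}$ arises from $G_i$ by adding a leaf.

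Finally I would run the induction: Theorem~\ref{theorem52these} gives that $C_g=G_0$ is free $(a,b)$-choosable, and Proposition~\ref{propGv} (applicable because $a\ge 2b$) propagates free $(a,b)$-choosability from $G_i$ to $G_{i+1}$, so after $k$ steps $G=G_k$ is free $(a,b)$-choosable.

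I do not expect a genuine obstacle in this argument; the steps needing the most care are the numerical check $a\ge 2b$ and the structural lemma that every connected unicyclic graph is a cycle with pendant trees attached, realized one leaf at a time — which itself hinges on identifying the girth with the length of the unique cycle.
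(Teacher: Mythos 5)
Your proof is correct, and it is organized differently from the paper's. The paper decomposes $G$ as the cycle $C$ plus the pendant trees $T_i$ rooted at the cycle vertices, and then argues by cases on where the precolored vertex $v$ sits: if $v\in C$ it colors the cycle first via Theorem~\ref{theorem52these} and extends to the trees via Corollary~\ref{corT}; if $v$ lies in some $T_i$ it colors that tree first, then the cycle starting from $v_i$, then the remaining trees. You instead build $G$ up from $C_g$ by adding one leaf at a time and invoke Proposition~\ref{propGv} at each step, with Theorem~\ref{theorem52these} as the base case; since Proposition~\ref{propGv} preserves \emph{free} $(a,b)$-choosability in full (the precolored vertex may be anywhere in $G_v$), the case analysis on the location of $v$ is absorbed into the induction and disappears. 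This is the same idea the paper itself uses to derive Corollary~\ref{corT} from a single edge, just with the base case changed to the cycle, and it is arguably the cleaner route; what the paper's version buys is a more explicit, constructive coloring order that mirrors the algorithmic discussion later in the text. Your two supporting checks are both sound: $a/b\ge 2+\lfloor g/2\rfloor^{-1}>2$ indeed forces $a\ge 2b$, and the structural claim (the unique cycle has length $g$, and repeatedly deleting a leaf of a connected unicyclic graph terminates at that cycle) is the standard degree-count argument. The only implicit assumption, shared with the paper, is that the unicyclic graph is connected.
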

\begin{proof}
Let $a,b$ be two integers such that  $a/b \geq  2+\lfloor \frac{n}{2}\rfloor ^{-1}$, $L$ be a $a$-color-list of $G$, $C=v_1,\ldots,v_g$ be the unique cycle (of length $g$) of $G$ and $T_i$, $i\in\{1,\ldots, g\}$, be the subtree of $G$ rooted at vertex $v_i$ of $C$.

Let $v$ be the vertex chosen for the free choosability and let $c_0\subset L(v)$ be a set of cardinality $b$. If $v\in C$, then by virtue of Theorem~\ref{theorem52these}, there exists an $(L,b)$-coloring $c$ of $C$ such that $c(v)=c_0$. This coloring can be easily extended to the whole graph by coloring the vertices of each tree $T_i$ thanks to Corollary~\ref{corT}.
If $v\in T_i$ for some $i$, $1\le i\le g$, then Corollary~\ref{corT} asserts that the coloring can be extended to $T_i$. Then color $C$ starting at vertex $v_i$ by using Theorem~\ref{theorem52these}.
Finally, complete it by coloring each tree $T_j$, $1\le j\ne i\le g$.
\end{proof}

\section{Applications}
As an example to the possible use of the results from Section~\ref{cycle}, we begin with determining the free choosability of a binocular graph, i.e. two cycles linked by a path.

For integers $m,n$ and $p$ such that $m,n\ge 3$ and $p\ge 0$, the {\em binocular graph} $\text{BG}(m,n,p)$ is the disjoint union of an $m$-cycle $u_0,u_1,\ldots,u_{m-1}$ and of an $n$-cycle $v_0,\ldots ,v_{n-1}$ with vertices $u_0$ and $v_0$ linked by a path of length $p$ given by $u_0,x_1,\ldots, x_{p-1},v_0$. Note that if $p=0$, then $u_0$ and $v_0$ are the same vertex.

\begin{proposition}
 For any $m\ge 3$, $n\ge 3$ and $p\ge 0$, $$\text{BG}(m,n,p) \in \fch \Big(2+ \Big\lfloor \frac{\min(m,n)}{2} \Big\rfloor ^{-1}\Big). $$
\end{proposition}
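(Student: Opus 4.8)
The plan is to reduce everything to two facts already available: the free $(a,b)$-choosability of cycles (Theorem~\ref{theorem52these}) and the free $(a,b)$-choosability of unicyclic graphs (the above proposition on unicyclic graphs). Set $g=\min(m,n)$ and $x=2+\lfloor g/2\rfloor^{-1}$, and fix integers $a,b$ with $a/b\ge x$, an $a$-color-list $L$ of $\text{BG}(m,n,p)$, a vertex $w$ and a set $c_0\subset L(w)$ with $|c_0|=b$. Since the floor function is nondecreasing, $\lfloor g/2\rfloor\le\lfloor m/2\rfloor$ and $\lfloor g/2\rfloor\le\lfloor n/2\rfloor$, so $x\ge 2+\lfloor m/2\rfloor^{-1}$ and $x\ge 2+\lfloor n/2\rfloor^{-1}$; hence any cycle of length $m$ or $n$, and any unicyclic graph of girth $m$ or $n$, is free $(a,b)$-choosable (in particular $a/b>2$, so $a\ge 2b$ as well). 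I would then argue by cases on the location of $w$, using that $\text{BG}(m,n,p)$ is built from the $m$-cycle, the $n$-cycle, and the connecting path $u_0,x_1,\dots,x_{p-1},v_0$.

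Case $w$ on one of the cycles, say the $m$-cycle. First color the $m$-cycle by Theorem~\ref{theorem52these} so that $c(w)=c_0$; this fixes $c(u_0)$. The subgraph induced by $V(C_n)\cup\{x_1,\dots,x_{p-1},u_0\}$ is the cycle $C_n$ with a pendant path of length $p$ ending at $u_0$, hence unicyclic of girth $n$ (when $p=0$ it is just $C_n$ with $u_0=v_0$); by its free $(a,b)$-choosability, extend the coloring to this subgraph while keeping $c(u_0)$ fixed. The two colorings agree on $u_0$, and together they cover every vertex and every edge of $\text{BG}(m,n,p)$, so their union is the desired $(L,b)$-coloring. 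The case $w$ on the $n$-cycle is symmetric, exchanging the two cycles (admissible since $x$ is symmetric in $m,n$); this also covers $w=u_0$ and $w=v_0$, as $u_0\in V(C_m)$ and $v_0\in V(C_n)$.

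Case $w=x_i$ an internal vertex of the connecting path ($1\le i\le p-1$, which forces $p\ge 2$). Cut the graph at $x_i$: the subgraph $A$ induced by $V(C_m)\cup\{x_1,\dots,x_i\}$ is $C_m$ with a pendant path ending at the leaf $x_i$, unicyclic of girth $m$; the subgraph $B$ induced by $V(C_n)\cup\{x_i,\dots,x_{p-1}\}$ is $C_n$ with a pendant path ending at the leaf $x_i$, unicyclic of girth $n$. Color $A$ with $c(x_i)=c_0$ and color $B$ with $c(x_i)=c_0$, both possible by free $(a,b)$-choosability of unicyclic graphs. Since $A$ and $B$ meet exactly in $x_i$, where both colorings equal $c_0$, and $A\cup B=\text{BG}(m,n,p)$ (their edge sets together being $E(C_m)\cup E(C_n)$ plus all path edges), the union of the two colorings is an $(L,b)$-coloring with $c(w)=c_0$. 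This completes the argument, giving $\text{BG}(m,n,p)\in\fch(2+\lfloor\min(m,n)/2\rfloor^{-1})$.

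The argument is essentially routine once the decomposition is in place; the only point needing a little care is the internal-path case, where one must verify that the two unicyclic pieces overlap in precisely the precolored vertex and together exhaust the edge set, so that the partial colorings glue into a proper $(L,b)$-coloring. (Implicitly one is using, as in the unicyclic proposition, that adjoining pendant paths does not affect free choosability when $a\ge 2b$, which is the content of Proposition~\ref{propGv} applied repeatedly.)
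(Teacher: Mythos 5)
Your proof is correct and takes essentially the same route as the paper: color the cycle containing the precolored vertex first (or, for an internal path vertex, split at that vertex), then extend across the shared vertex to the remaining path-plus-cycle piece. The only cosmetic difference is that you invoke the unicyclic-graph proposition as a package where the paper applies Theorem~\ref{theorem52these} and Corollary~\ref{corT} directly, which amounts to the same decomposition.
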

\begin{proof}
Assume without loss of generality that $m\ge n$ and let $a,b$ be integers such that  $a/b \geq  2+\lfloor \frac{n}{2}\rfloor ^{-1}$. Let $L$ be a $a$-color-list of $\text{BG}(m,n,p)$.
Let $y$ be the vertex chosen for the free choosability and let $c_0\subset L(y)$ be a set of cardinality $b$. If $y$ lies on the $m$-cycle, then by virtue of Theorem~\ref{theorem52these}, there exists an $(L,b)$-coloring $c$ of the $m$-cycle such that $c(y)=c_0$. By Corollary~\ref{corT}, this coloring can be extended to the vertices of the path. Now, it remains to color the vertices of the $n$-cycle, with $v_0$ being already colored. This can be done thanks to Theorem~\ref{theorem52these}.
If $y$ lies on the $n$-cycle, the argument is similar.
If $y\in \{x_1,\ldots, x_{p-1}\}$, then the coloring can be extended to the whole path and the coloring of the $m$-cycle and $n$-cycle can be completed thanks to Theorem~\ref{theorem52these}.
\end{proof}

This method can be extended to prove similar results on graphs with more than two cycles, connected by a tree structure.

Define a {\em tree of cycles} to be a graph $G$ such that all its cycles are disjoint and collapsing all vertices of each cycle of $G$ produces a tree.


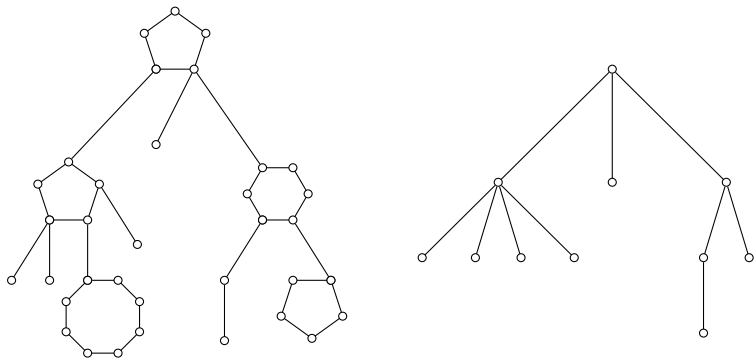
\begin{figure}
\begin{center}
\begin{tikzpicture}[scale=1]
\tikzstyle{every node}=[draw,circle,fill=white,minimum size=3pt,inner sep=1pt]
\draw (8,0) node (r)  {}
     (r) -- ++(1.5cm,-1.5cm) node (x3)  {}
     (r)-- ++(0cm,-1.5cm) node (x2) {}
     (r)-- ++(-1.5cm,-1.5cm) node (x1) {}
     (x1)-- ++(-1cm,-1cm) node (x11) {}
     (x1)-- ++(-.3cm,-1cm) node (x12) {}
     (x1)-- ++(.3cm,-1cm) node (x11) {}
     (x1)-- ++(1cm,-1cm) node (x12) {}
     (x3)--++(-.3cm,-1cm) node (x31) {}
     (x3)--++(.3cm,-1cm) node (x32) {}
     (x31)--++(0cm,-1cm) node () {};

\draw (2,0) node (r)  {}
     -- ++(0:.5cm) node (r1)  {}
     -- ++(72:.5cm) node (r2) {}
     -- ++(144:.5cm) node (r3) {}
     -- ++(216:.5cm) node (r4) {}
     -- ++(288:.5cm) node (r5) {}
    (r1)--++(-.5cm,-1cm) node (){};
\draw (0.6,-2) node (f)  {}
     -- ++(0:.5cm) node (f1)  {}
     -- ++(72:.5cm) node (f2) {}
     -- ++(144:.5cm) node (f3) {}
     -- ++(216:.5cm) node (f4) {}
     -- ++(288:.5cm) node (f5) {}
     (r)--++ (f3)
     (f)--++(-.5cm,-.8cm) node (){}
     (f)--++(0cm,-.8cm) node (){}
     (f1)--++(0cm,-.8cm) node (f12){}
     (f2)--++(0.5cm,-.8cm) node (){}
     (f12)--++(0:.4) node () {}
          --++(-45:.4) node () {}
          --++(-90:.4) node () {}
          --++(-135:.4) node () {}
          --++(180:.4) node () {}
          --++(135:.4) node () {}
          --++(90:.4) node () {}
          --++(45:.4) node () {}
          ;

\draw (3.4,-2) node (g)  {}
     -- ++(0:.4cm) node (g1)  {}
     -- ++(60:.4cm) node (g2) {}
     -- ++(120:.4cm) node (g3) {}
     -- ++(180:.4cm) node (g4) {}
     -- ++(240:.4cm) node (g5) {}
     -- ++(300:.4cm) node (g6) {}
     (r1)--++(g4)
     (g)--++(-0.5cm,-.8cm) node (fg) {}
     (fg)--++(0cm,-.8cm) node () {}
     (g1)--++(0.5cm,-.8cm) node (g11) {}
     -- ++(-72:.5cm) node () {}
     -- ++(-144:.5cm) node () {}
     -- ++(144:.5cm) node () {}
     -- ++(72:.5cm) node () {}
     -- ++(0:.5cm) node () {};
\end{tikzpicture}
\end{center}
 \caption{\label{flc}A tree of cycles (on the left), and the associated tree obtained by collapsing cycles (on the right).}
\end{figure}

\begin{corollary}
Any tree of cycles of girth $g$ is in $\fch (2+ \lfloor \frac{g}{2} \rfloor ^{-1})$.
\end{corollary}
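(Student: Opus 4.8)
The plan is to fix a tree of cycles $G$ of girth $g$ and integers $a,b$ with $a/b\ge 2+\lfloor g/2\rfloor^{-1}$, and to prove $G$ is free $(a,b)$-choosable by induction on $|V(G)|$, using Theorem~\ref{theorem52these} to color the cycles of $G$ one at a time (each time with one vertex already fixed) and Proposition~\ref{propGv} to strip off pendant trees. The first thing to record is that the ratio hypothesis transfers to every individual cycle: any cycle $C$ of $G$ has length $\ell\ge g$, so $\lfloor\ell/2\rfloor^{-1}\le\lfloor g/2\rfloor^{-1}$ and hence $a/b\ge 2+\lfloor\ell/2\rfloor^{-1}$; by Theorem~\ref{theorem52these}, $C$ is free $(a,b)$-choosable, so any prescribed $b$-coloring of one vertex of $C$ extends to an $(L,b)$-coloring of $C$. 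Also $a/b>2$ forces $a\ge 2b$, so Proposition~\ref{propGv} is available, and the same inequality is inherited by any smaller tree of cycles whose girth is at least $g$, which is what the induction will produce.

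Fix an $a$-color-list $L$, a vertex $y$, and $c_0\subset L(y)$ with $|c_0|=b$. If $G$ has no cycle it is a tree and the claim is Corollary~\ref{corT} (the one-vertex case being trivial); so assume $G$ has a cycle. If $G$ has a pendant vertex $v$, then $G=(G-v)_v$ for the smaller tree of cycles $G-v$ (whose girth is still $\ge g$), so the induction hypothesis together with Proposition~\ref{propGv} gives that $G$ is free $(a,b)$-choosable. Otherwise $G$ has minimum degree $\ge 2$; if $G$ is a single cycle we are done by Theorem~\ref{theorem52these}, and if not I would examine the block-cut tree of $G$, whose blocks are precisely the (pairwise disjoint) cycles of $G$ together with its bridges. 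It has at least two leaf blocks, and a leaf block that is a bridge would create a pendant vertex; hence some leaf block is a \emph{pendant cycle}: a cycle $C$ exactly one of whose vertices, $w$, is a cut vertex of $G$, all other vertices of $C$ having degree $2$. Set $G'=G-(V(C)\setminus\{w\})$; this is a tree of cycles of girth $\ge g$ on strictly fewer vertices, and $V(C)\cap V(G')=\{w\}$.

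Since $C$ and $G'$ meet only in the cut vertex $w$, any $(L,b)$-colorings of $C$ and of $G'$ that agree at $w$ glue to an $(L,b)$-coloring of $G$ (there being no edges between $V(C)\setminus\{w\}$ and $V(G')\setminus\{w\}$). If $y\in V(G')$, the induction hypothesis gives an $(L,b)$-coloring $c$ of $G'$ with $c(y)=c_0$; then Theorem~\ref{theorem52these}, applied to $C$ with the value $c(w)$ prescribed at $w$, extends $c$ over $C$, and we are done. If instead $y\in V(C)\setminus\{w\}$, I would first use Theorem~\ref{theorem52these} on $C$ to obtain an $(L,b)$-coloring $c$ of $C$ with $c(y)=c_0$, then invoke the free $(a,b)$-choosability of $G'$ (induction hypothesis) to color $G'$ with $c(w)$ prescribed at $w$, and glue along $w$. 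This exhausts the two possibilities for $y$ (which are disjoint and cover $V(G)$) and completes the induction.

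The only genuinely structural step is the existence of a pendant cycle when $G$ has minimum degree $\ge 2$ and is not a single cycle; this is the block-cut tree observation above, and I expect it to be the one point that needs a careful sentence or two. Everything else is routine bookkeeping: that deleting a pendant vertex, or a pendant cycle minus its attaching vertex, again produces a tree of cycles of girth at least $g$ on fewer vertices, that the ratio bound descends to each cycle and to each smaller graph arising in the induction, and that two proper list-colorings sharing exactly one vertex combine to a proper list-coloring.
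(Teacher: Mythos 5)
Your proof is correct, and it follows essentially the same strategy the paper intends: the paper states this corollary without proof, remarking only that the binocular-graph method (Theorem~\ref{theorem52these} for each cycle with one vertex prescribed, Corollary~\ref{corT} and Proposition~\ref{propGv} for the connecting tree parts) "can be extended". Your contribution is to make that extension precise via induction on the order, peeling off pendant vertices or leaf-block cycles, which is a faithful and complete write-up of the argument the authors sketch.
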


\section{Algorithmic considerations}
Let $n\ge 3$ be an integer and let $a,b$ be two integers such that  $a/b \geq  2+\lfloor \frac{n}{2}\rfloor ^{-1}$. Let $L$ be a $a$-color-list of $C_n$.

As defined in~\cite{AGT10}, a {\em waterfall} list $L$ of a path $P_{n+1}$ of length $n$ is a list $L$ such that for all $i,j \in \{0,\dots,n\}$ with $|i-j| \geq 2$, we have $L(i) \cap L(j) = \emptyset$.
Le  $m=|\cup_{i=0}^n L(i)|$ be the total number of colors of the color-list $L$.

The algorithm behind the proof of Proposition~\ref{theorem48these} consists in three steps: first, the transformation of the list $L$ into a waterfall list $L'$ by renaming some colors; second, the construction of the $(L',b)$-coloring by coloring vertices from 0 to $n-1$, giving to vertex $i$ the first $b$-colors that are not used by the previous vertex; third, the backward transformation to obtain an $(L,b)$-coloring from the $(L',b)$-coloring by coming back to original colors and resolving color conflicts if any.
It can be seen that the time complexity of the first step is $O(mn)$; that of the second one is $O(a²n)$ and that of the third one is $O(\max(a,b^3)n)$. Therefore, the total running time for computing a free $(L,b)$-coloring of the cycle $C_n$ is $O(\max(m,a^2,b^3)n)$.



\end{document}